\documentclass[12pt,letterpaper,oneside]{amsart}
\usepackage[a4paper, left=3cm, right=3cm, bottom=3cm,top=3cm]{geometry}
\usepackage{mathscinet}
\usepackage{amsmath, amsfonts, amssymb, amsthm}
\usepackage{xcolor}
\definecolor{blue}{rgb}{0,0.5,0.5}
\definecolor{ocean}{rgb}{0.00,0.26,0.50}
\usepackage[colorlinks = true, linkcolor=blue, citecolor=ocean]{hyperref}
\newtheorem{theorem}{Theorem}[section]
\newtheorem{proposition}[theorem]{Proposition}
\newtheorem{definition}[theorem]{Definition}
\newtheorem{remark}[theorem]{Remark}

\newtheorem{example}[theorem]{Example}

\newtheorem{corollary}[theorem]{Corollary}
\newtheorem{lemma}[theorem]{Lemma}

\title[Quasiisometry Invariance of Relatively hyperbolic TDLC groups]{Quasiisometry Invariance of Relatively hyperbolic TDLC groups}
\author{Arunava Mandal and Ravi Tomar}

\address{Department of Mathematics,
Indian Institute of Technology Roorkee,
Uttarakhand 247667, India}
\email{arunava@ma.iitr.ac.in}

\address{Beijing International Center for Mathematical Research, Peking University, No. 5 Yiheyuan Road Haidian District, Beijing, P.R.China 100871}
\email{ravitomar547@gmail.com}

\begin{document}

\begin{abstract}
  Motivated by the work of Dru\c{ț}u--Sapir, we introduce a definition of relatively hyperbolic TDLC groups that depends only on their Cayley–Abels graphs and show that this new definition is equivalent to one introduced by Arora--Mart\'inez-Pedroja. Then, we prove that relative hyperbolicity for TDLC groups is invariant under quasiisometry.
\end{abstract}
\maketitle
\subjclass{\it 2020 Mathematics Subject Classification:} Primary 20F65; Secondary 22D05, 20E08

\keywords{\bf Keywords:} Quasiisometries, Asymptotically tree-graded spaces, Relatively hyperbolic TDLC groups.

\setcounter{tocdepth}{2}
\tableofcontents

\section{Introduction}\label{section:intro}

One of the central themes of Gromov's program in geometric group theory is to determine which properties of finitely generated groups are preserved under quasiisometries.
Among various fundamental notions introduced by Gromov \cite{gromov-hypgps} for finitely generated groups is relative hyperbolicity. Since its introduction, relative hyperbolicity has been studied extensively from various perspectives, leading to several equivalent definitions, numerous interesting examples, and deep structural results; see, for instance, \cite{farb-relhyp,bowditch-relhyp,drutu-tree-graded-space,drutu-sapir-rel-hyp-and-RD,osin-book,groves-manning,drutu-qi-rel-hyp, jason-mosher-drutu, hru-rel}. A celebrated result of Druțu \cite{drutu-qi-rel-hyp} shows that relative hyperbolicity is invariant under quasiisometry, and hence is a geometric property. 

In the spirit of Gromov's program, it is natural to ask the analogous question for compactly generated groups. More generally, the large-scale geometry of compactly generated TDLC groups has attracted considerable attention in recent years (see, for example, \cite{baumgartner-moller-willis-flatrank-one, cornulier-qi-classification, cornulier-harpe-book-metric-geometry, arora-castellano-pedroja, arora-pedroja, combination-theorem, locally-quasiconvex-combination-theorem,quasiisometry-tomar-mandal}). The study of relative hyperbolicity in the setting of totally disconnected locally compact (TDLC) groups is comparatively recent. In \cite{caprace-mood-amenable-hyp}, for locally compact groups, the authors studied amenable relatively hyperbolic groups. Later, following Bowditch's approach \cite{bowditch-relhyp}, Arora--Mart\'inez-Pedroja \cite{arora-pedroja} introduced a notion of relative hyperbolicity for compactly generated TDLC groups and established several results in this direction. Subsequently, in \cite{combination-theorem}, the authors introduced two other equivalent definitions of relatively hyperbolic TDLC groups, which are also equivalent to one introduced by Arora--Mart\'inez-Pedroja. Further, they prove combination theorems for relatively hyperbolic TDLC groups and describe their Bowditch boundaries.
Motivated by Dru\c{t}u's work \cite{drutu-qi-rel-hyp} in the finitely generated setting, it is therefore natural to ask whether relative hyperbolicity of compactly generated TDLC groups is preserved under quasiisometries. The following theorem answers this question.

\begin{remark}
    Throughout the paper, for a quasiisometry between compactly generated topological groups, we take word metrics coming from the compact generating sets.
\end{remark}

\begin{theorem}\label{theorem-main}
 Let $G$ and $G'$ be two quasiisometric compactly generated TDLC groups. If $G$ is hyperbolic relative to a family of compactly generated open subgroups $\mathcal H=\{H_1,\cdots, H_n\}$, then $G'$ is hyperbolic relative to compactly generated subgroups $\{H'_1,\cdots, H'_m\}$ and $H_i'$ embeds quasiisometrically in $H_j$ for some $j=j(i)\in\{1,\ldots,n\}.$
\end{theorem}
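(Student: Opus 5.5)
We follow Dru\c{t}u's strategy, transported to Cayley--Abels graphs. Fix Cayley--Abels graphs $X$ for $G$ and $X'$ for $G'$. These are connected, locally finite graphs on which the groups act vertex-transitively with compact open vertex stabilizers, and the orbit maps are quasiisometries for the word metrics. The quasiisometry $G\to G'$ therefore induces a quasiisometry $q\colon X\to X'$. We use the Dru\c{t}u--Sapir type definition announced in the abstract: $G$ is hyperbolic relative to $\mathcal H$ if and only if $X$ is asymptotically tree-graded with respect to the family $\mathcal A=\{gH_iv_0\}$ of left cosets of the $H_i$. Here $gH_iv_0$ denotes the orbit of a base vertex, which is quasiisometric to $H_i$ because $H_i$ is open and compactly generated. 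We take for granted the equivalence of this definition with the Arora--Mart\'inez-Pedroja one, which the paper proves first. It then suffices to show that $X'$ is asymptotically tree-graded with respect to a $G'$-invariant family of subsets that has finitely many orbits, whose members are quasiisometric to compactly generated open subgroups $H'_i$ of $G'$.

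\textbf{Step 1 (geometric transfer).} Apply Dru\c{t}u's quasiisometric rigidity theorem for asymptotically tree-graded metric spaces: if $X$ is asymptotically tree-graded with respect to $\mathcal A$, and the pieces are not themselves asymptotically tree-graded in a nontrivial way (we may first refine $\mathcal H$ to a minimal, or "unconstricted", family, or use Dru\c{t}u's general version), then every quasiisometric embedding of such a piece into $X'$ lies uniformly close to a piece. In the more general formulation, $X'$ is asymptotically tree-graded with respect to the family $\mathcal A'$ of subsets obtained as follows. Take the $q$-images of the pieces, then take the uniform neighbourhoods of the images under $G'$-translates of the quasiisometry-conjugate maps $g'\circ q\circ g$. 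Dru\c{t}u's theorem is stated for geodesic metric spaces, so it applies verbatim to the graphs $X,X'$ and to quasiisometries between them. Concretely, we consider the collection of all subsets of $X'$ of the form $\bar q(\mathcal A)$, where $\bar q$ ranges over quasiisometries $X\to X'$ with uniform constants obtained by composing $q$ with elements of $G$ and $G'$. Dru\c{t}u's result gives that these are uniformly Hausdorff close to the pieces of one tree-graded structure on $X'$ (the "Morse/rigidity" property of pieces). This yields a $G'$-invariant family $\mathcal A'$ with respect to which $X'$ is asymptotically tree-graded.

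\textbf{Step 2 (algebraization).} Show that the pieces of $\mathcal A'$ come from subgroups. For a piece $A'\in\mathcal A'$, let $H'_{A'}=\{g'\in G': d_{\rm Haus}(g'A',A')<\infty\}$. Because $\mathcal A'$ is $G'$-invariant and distinct pieces have bounded coarse intersection, $H'_{A'}$ actually stabilizes $A'$ up to the uniform constant. Local finiteness of $X'$ and compactness of vertex stabilizers then show that $H'_{A'}$ is closed, and that it contains an open subgroup (the stabilizer of a vertex of $A'$ intersected appropriately), so it is open. Next we need that $H'_{A'}$ acts coboundedly on $A'$. Finitely many pieces meet any ball, since $G'$ acts coboundedly on $X'$ and pieces are uniformly separated. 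A pigeonhole/compactness argument over the vertex-transitive action then shows that $H'_{A'}$ acts coboundedly on $A'$ and that there are finitely many $G'$-orbits of pieces. So $A'$ is quasiisometric to $H'_{A'}$. Compact generation of $H'_{A'}$ follows because $A'$ is coarsely connected, its neighbourhood being quasiisometric to a connected graph, and $H'_{A'}$ acts coboundedly with compact open stabilizers. Choosing orbit representatives gives $H'_1,\dots,H'_m$.

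\textbf{Step 3 (embedding).} By construction, each $A'$ is uniformly close to $\bar q(A)$ for a piece $A=gH_jv_0$, or to a subset of such an image if rigidity only yields containment. Hence $H'_i\simeq A'\hookrightarrow A\simeq H_j$ is a quasiisometric embedding. The main obstacle is Step 2, in particular showing coboundedness of the stabilizer on its piece. There, local compactness replaces the finiteness used by Dru\c{t}u, and we expect to argue through the proper cocompact action on the locally finite graph $X'$ and a uniform bound on the number of pieces through a vertex.
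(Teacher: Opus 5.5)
Your overall architecture matches the paper's: push the ATG structure through $q$, make it $G'$-equivariant, algebraize the pieces via their stabilizers, and pull back with the quasi-inverse. Your Steps~2 and~3 are essentially Proposition~\ref{main-proposition} and the closing $\pi_2\circ\bar q\circ\pi_1$ argument.

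\textbf{The gap is in Step~1.} The family of all translates $g'q(gA)$, or uniform neighbourhoods of them, is in general \emph{not} uniformly Hausdorff close to the pieces of a single tree-graded structure on $X'$. It can even violate $(\alpha_1)$. The rigidity you invoke (a quasiisometric image of a piece lies near a piece) is available only under extra hypotheses on the pieces, such as being unconstricted (Dru\c{t}u--Sapir) or non-relatively hyperbolic (Behrstock--Dru\c{t}u--Mosher). The theorem assumes neither, and you give no argument that $\mathcal H$ can be refined to such a family.

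Here is a concrete failure. Let $G=G'=A*B*C$ with $A,B,C\cong\mathbb Z^2$, hyperbolic relative to $\{A*B,\,C\}$. Let $\sigma$ be the automorphism exchanging $A$ and $C$ and fixing $B$. Define $q$ to be $\sigma$ on elements whose normal form begins with a syllable in $A\cup C$, and the identity otherwise. For generating sets exchanged by $\sigma$, $q$ is an isometry. For $1\neq b\in B$ and $1\neq c\in C$ we have $q(bc(A*B))=bc(A*B)$ and $q(c(A*B))=\sigma(c)(C*B)$. So your family contains both $A*B$ and $C*B$, whose intersection $B$ is unbounded.

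\textbf{What replaces Step~1.} The missing input is Dru\c{t}u's construction in the proof of her Theorem~5.4, which the paper imports as Theorem~\ref{theorem-ATG}. It starts from the single, non-equivariant structure $q(\mathcal A)$ on $X'$ given by Dru\c{t}u--Sapir's Theorem~5.1. It then builds a $G'$-invariant family with respect to which $X'$ is ATG. The price is that the resulting orbits $H_i'U'$ are only \emph{contained} in $N_L(q(A))$ for some $A\in\mathcal A$; in the example above, the cosets of $A$, $B$, $C$ form such a family. Your Step~3 already allows for containment, so once Step~1 is replaced by this input, Steps~2 and~3 go through as in Proposition~\ref{main-proposition}.

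\textbf{Cases your reduction omits.} You should discard pieces of bounded diameter, as the paper does. You then need to treat separately the case where no pieces remain ($G'$ hyperbolic, handled by Lemma~\ref{lemma-hyp-implies-rel-hyp}) and the case where $G$ is compact.
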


 We immediately observe the following.
 
\begin{corollary}\label{corollary-main}
 Let $G$ be a compactly generated TDLC group which is hyperbolic relative to a family of compactly generated open subgroups $\{H_1,\ldots, H_n\}$. Then, for a compact TDLC group $K$, the group $G\times K$ is hyperbolic relative to a family of compactly generated open subgroups $\{H'_1,\ldots, H'_m\}$ and $H_i'$ embeds quasiisometrically in $H_j$ for some $j=j(i)\in\{1,2,\ldots,n\}.$  
\end{corollary}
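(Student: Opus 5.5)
The plan is to deduce the corollary directly from Theorem~\ref{theorem-main}. It suffices to show that $G\times K$ is a compactly generated TDLC group quasiisometric to $G$.

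First, $G\times K$ is totally disconnected and locally compact, since both properties pass to finite products. It is compactly generated: if $S\subseteq G$ is a compact generating set of $G$, then $S\times K$ is compact, because $K$ is compact. It also generates $G\times K$. Indeed, $(s,k)(s',1)=(ss',k)$ lies in the subgroup generated by $S\times K$ whenever $1\in K$. So for any $(g,k)$ we write $g=s_1\cdots s_r$ with $s_i\in S^{\pm1}$ and obtain $(g,k)=(s_1,k)(s_2,1)\cdots(s_r,1)$.

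Next, the projection $\pi\colon G\times K\to G$ is a quasiisometry with respect to the word metrics $d_{S\times K}$ and $d_S$. We have $\pi(S\times K)=S$, so $\pi$ is $1$-Lipschitz. It is surjective, hence coarsely surjective. For the reverse inequality, the computation above shows that
\[
|(g,k)|_{S\times K}\le |g|_S+1.
\]
Applying this to $(g,k)^{-1}(g',k')$ gives
\[
d_{S\times K}\bigl((g,k),(g',k')\bigr)\le d_S(g,g')+1 .
\]
The word metrics on a compactly generated group coming from different compact generating sets are quasiisometric, consistent with the Remark preceding Theorem~\ref{theorem-main}. Hence this choice of generating sets entails no loss of generality.

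Finally, we apply Theorem~\ref{theorem-main} with $G'=G\times K$. This yields compactly generated subgroups $H_1',\ldots,H_m'$ of $G\times K$ relative to which $G\times K$ is hyperbolic, with each $H_i'$ quasiisometrically embedded in some $H_{j(i)}$. The statement asserts that these $H_i'$ are open. The only point needing care is therefore whether Theorem~\ref{theorem-main} produces open subgroups. In the Arora--Mart\'inez-Pedroja framework, peripheral subgroups of a relatively hyperbolic TDLC group are taken to be open, so the conclusion of the theorem should already include this. If it does not, I would instead take the explicit family $H_j'=H_j\times K$. Each $H_j\times K$ is open and compactly generated, and quasiisometric to $H_j$ by the same projection argument. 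I would then check that $G\times K$ is hyperbolic relative to $\{H_j\times K\}$. The Cayley–Abels graph of $G$ with respect to the $H_j$ serves also for $G\times K$ acting through $\pi$: vertex stabilizers stay compact open (multiplied by $K$), while fineness and hyperbolicity are unchanged. I expect this openness bookkeeping to be the only nontrivial step.
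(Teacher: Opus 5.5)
Your proposal is correct and follows the paper's route. The paper cites \cite[Remark 2.17]{combination-theorem} for the quasiisometry between $G\times K$ and $G$ and then applies Theorem~\ref{theorem-main}; you instead verify that quasiisometry by hand via the projection. One harmless correction: $(1,k)$ may have word length $2$ when $1\notin S$, so your additive constant should be $2$ rather than $1$.

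Your concern about openness is settled inside the proof of Theorem~\ref{theorem-main}. There the peripheral subgroups come either from Proposition~\ref{main-proposition}(7), which yields compactly generated open subgroups, or, in the hyperbolic case, from Lemma~\ref{lemma-hyp-implies-rel-hyp}, which uses a compact open subgroup. So your fallback family $\{H_j\times K\}$ is not needed, though it would also work and gives the sharper conclusion $m=n$.
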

\begin{proof}
    By \cite[Remark 2.17]{combination-theorem}, we see that $G\times K$ is quasiisometric to $G$. Then, the corollary follows from Theorem \ref{theorem-main}.
\end{proof}
In particular, if $D$ is a finitely generated discrete group that is hyperbolic relative to finitely generated 
subgroups $\{H_1,\cdots,H_n\}$, and let $K$ be a compact TDLC group. Then, $D\times K$ is hyperbolic relative to $\{H'_1,\cdots,H'_m\}$, where each $H_j'$ embeds quasiisometrically in some $H_i$. For the definition of amalgamated free products of topological groups and the topology on them, one is referred to \cite{cornulier-harpe-book-metric-geometry}.

\begin{example}
Let $\mathbb Q_p$ be the field of $p$-adic numbers and let $A={\rm SL}(2,\mathbb Q_p)$, and $C$ be a compact open subgroup of $A$. If $\overline{A}$ denotes another copy of $A$, then the amalgamated free product
$G:=A\ast_C \overline{A}$
is hyperbolic relative to $\{A,\overline{A}\}$ \cite{combination-theorem}.
Now, let $B=\operatorname{Aut}(T_{p+1}),$
where $T_{p+1}$ is the $(p+1)$-regular tree. Since both ${\rm SL}(2,\mathbb Q_p)$ and $\operatorname{Aut}(T_{p+1})$ act properly and cocompactly on $T_{p+1}$, then it implies that $A$ and $B$ are quasi-isometric. Set $H:=B\ast_D\bar{B}$. Then, by \cite{quasiisometry-tomar-mandal}, $H$ is quasiisometric to $G$. Although it is known that $H$ is hyperbolic relative to $\{B,\overline{B}\}$, yet Theorem \ref{theorem-main} gives another relatively hyperbolic structure on $H$.
\end{example}
We end the introduction with the following remark.
\begin{remark}
    Let $A,B$ and $C$ be non-compact compactly generated TDLC groups and let $G=A\ast_K B\ast_L C$ be an amalgamated free product, where $K,L$ are compact groups topologically isomorphic to compact open subgroups of $A,B$ and $B,C$, respectively. Then, by \cite[Theorem 1.1]{combination-theorem}, $G$ is hyperbolic relative to $\{A,B,C\}$ as well as hyperbolic relative to $\{A\ast_K B, C\}$. Hence, the exact relation between parabolic subgroups of $G$ is not obvious. However, if we put an extra hypothesis on parabolic subgroups, then we will be able to say the exact relation between parabolic subgroups. More precisely, suppose $G$ is a compactly generated group hyperbolic relative to a collection $\{H_1,\cdots,H_n\}$ of compactly generated open subgroups and each $H_i$ is non-relatively hyperbolic (that is, they are not hyperbolic relative to any collection of non-compact proper subgroups). Then, using the proof of Theorem 4.1 and Theorem 4.8 of \cite{jason-mosher-drutu}, we see that if a compactly generated TDLC group $G'$ is quasiisometric to $G$, then $G'$ is hyperbolic relative to $\{H_1',\cdots,H_m'\}$, where each $H_j$ is quasiisometric to some $H_i$. For example, if $D$ is the mapping class group of a closed orientable surface of genus at least $2$, then $D$ is a non-relatively hyperbolic group. Suppose $K$ is a compact TDLC group and $G$ is a free product of two copies of $D$, i.e. $G=D\ast D$. Then, by Corollary \ref{corollary-main}, $G\times K$ is hyperbolic relative to a collection $\mathcal G$ of subgroups of $G\times K$ such that each subgroup in $\mathcal G$ is quasiisometric to $D$. 
\end{remark}

{\em A few words on the proof:} To prove Theorem \ref{theorem-main}, we introduce a notion of relative hyperbolicity for compactly generated TDLC groups solely in terms of their Cayley-Abel graphs (Definition \ref{definition-tdrh-II}), following Dru\c{t}u--Sapir's approach \cite{drutu-tree-graded-space} in the finitely generated setting. In contrast, the other definitions of relative hyperbolicity in the TDLC framework, including those in \cite{arora-pedroja} and \cite{combination-theorem}, rely not only on a Cayley--Abel graph of the group but also on an additional metric space obtained by attaching suitable geometric objects to the orbits of peripheral subgroups in the Cayley-Abels graph. First, in Theorem \ref{theorem-rh-equivalence}, we show that this notion is equivalent to the existing definitions of relative hyperbolicity for compactly generated TDLC groups. Then, in Theorem \ref{theorem-ATG}, we show that if a Cayley-Abels graph of a compactly generated TDLC group is asymptotically tree-graded with respect to a collection of subsets, then $G$ is either hyperbolic or relatively hyperbolic with respect to a collection of compactly generated open subgroups of $G$. This is the main technical result of this paper. To prove Theorem \ref{theorem-ATG}, we first prove Proposition \ref{main-proposition}, which requires an appropriate modification from the corresponding result in \cite{drutu-qi-rel-hyp}; for example, in the proof, it is required that the peripheral subgroups are compactly generated, and this follows from the authors' previous work in \cite{locally-quasiconvex-combination-theorem}. Finally, we prove Theorem \ref{theorem-main} by using Theorem \ref{theorem-ATG}. 

\vspace{.5cm}

The paper is organized as follows. In Section~\ref{section-preliminaries}, we fix notation, recall a definition of relative hyperbolicity, and provide a brief introduction to Cayley--Abels graphs of compactly generated TDLC groups. In Section~\ref{section-3}, we define a new notion of relative hyperbolicity, prove its well-definedness in Proposition~\ref{prop-well-definedness}, and establish Theorem~\ref{theorem-rh-equivalence}. Finally, in Section~\ref{section-4}, we prove Theorem~\ref{theorem-main} using Proposition~\ref{main-proposition}.

\section{Preliminaries}\label{section-preliminaries}
In this section, we establish notations and recall some basic definitions and results that are relevant to us. In this paper, all topological groups are Hausdorff. Throughout the paper, all graphs are assumed to be connected, and each edge has length one so that graphs are naturally geodesic metric spaces. For a graph $\Gamma$, we denote by $V(\Gamma)$ and $E(\Gamma)$ the set of vertices and edges of $\Gamma$, respectively. For a metric space $(Z,d)$, a subset $Y\subseteq Z$, and $D\geq 0$, we denote the closed $D$-neighborhood of $Y$, i.e. $\{z\in Z: d(z,r)\leq D \text{ for some } r\in R\}$ by $N_D(Y)$. Let $(Y,d_Y)$ and $(Z,d_Z)$ be metric spaces.
Given $\lambda\geq 1,c\geq 0$, a map 
	$f:Y\rightarrow Z$ is said to be a \emph{$(\lambda,c)$-quasiisometric} if, for all $y,y'\in Y$, we have,
	$$\dfrac{1}{\lambda}d_Y(y,y')-c\leq d_Z(f(y),f(y'))\leq \lambda d_Y(y,y')+c.$$

The map $f$ is said to be $(\lambda,c)$-\emph{quasiisometry} if $f$ is a $(\lambda,c)$-quasiisometric embedding and moreover, $N_D(f(Y))=Z$ for some $D\geq 0$.

\begin{definition}
    For $\mathcal K\geq 0$, an action of a group $G$ on a metric space $Z$ is said to be {\em $\mathcal K$-transitive} if ,for $z\in Z$, $Z=N_{\mathcal K}(Gz)$, where $Gz$ denotes the $G$-orbit of $z$.
\end{definition}

{\bf Cayley-Abels graphs:} Given a finitely generated group, one can construct its Cayley graph and thus treat the group as a geometric object. For TDLC groups, an analog construction has been taken into account by Abels \cite{abels}. These are known as {\em Cayley-Abels graphs} named after him. For details on Cayley-Abels graph, see for example \cite[p. 150]{monod-book} and \cite{kron-moller}.

\begin{definition}\label{definition-cayley-abels-graph}
A locally finite connected graph $X$ is said to be a {\em Cayley-Abels graph} of a TDLC group $G$ if $G$ acts transitively on $V(X)$ and stabilizers of vertices are compact open subgroups of $G$.
\end{definition}

 A topological group is said to be {\em compactly generated} if it is algebraically generated by a compact subset. For a TDLC group $G$, a Cayley-Abels graph exists if and only if $G$ is compactly generated \cite[Theorem 2.2]{kron-moller}.

 \vspace{.2cm}

 {\bf Existence of a Cayley-Abels graph:} Let $G$ be a compactly generated TDLC group, and $U$ be a compact open subgroup of $G$. If $K$ is a compact generating set of $G$, there exists a finite symmetric set $S$ containing the identity element of $G$ such that $K\subset SU$. Define a graph $X(K,S,U)$ whose vertex set is the set of left cosets of $U$ in $G$, and the edge set is $\{\{gU,gsU\}:g\in G \text{ and } s\in S\}$. Then, it is easy to check that $X$ is a Cayley-Abels graph for $G$ (see  \cite{kron-moller}). We write $X$ in place of $X(K,S,U)$ when $K,S,U$ are clear from the context.

Let $\mathcal H=\{H_1,\dots, H_n\}$ be a finite collection of compactly generated open subgroups of $G$. Let $U$ be a compact open subgroup of $G$. Let $K$ be a compact generating set of $G$, and let $K_i$ be a compact generating set of $H_i$ for $1\le i\le n$. Note that $U\cap H_i$ is an infinite compact open subgroup of $H_i$ for each $i$ by Lemma 2.4 of \cite{combination-theorem}. Then, there exists a finite symmetric set $S\subset G$ containing the identity element such that $K\subset SU$. Also, for each $i$, there is a finite symmetric set $S_i\subset H_i$ containing identity such that $K_i\subset S_i(U\cap H_i)$. We assume that $S_i\subset S$ for all $i$. Thus, the Cayley-Abels graph $Y_i=Y_i(K_i,S_i,U\cap H_i)$ of $H_i$ is embedded in the Cayley-Abels graph $X=X(K,S,U)$ of $G$. 
Therefore, we can treat $Y_i$ as a subgraph of $X$.
\begin{lemma}\label{lemma-bijection}
    There is a bijection between $V(Y_i)$ and the $H_i$-orbit $\mathcal O_{H_i}$ of $U$ in $X$.
\end{lemma}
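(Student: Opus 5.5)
The plan is to write down the natural map and check injectivity and surjectivity directly. The vertex set of $Y_i=Y_i(K_i,S_i,U\cap H_i)$ is the set of left cosets $h(U\cap H_i)$ with $h\in H_i$. The $H_i$-orbit of the vertex $U$ in $X$ is $\mathcal O_{H_i}=\{hU: h\in H_i\}$. So I would define
\[
\Phi: V(Y_i)\to \mathcal O_{H_i},\qquad \Phi\bigl(h(U\cap H_i)\bigr)=hU ,
\]
and verify three things: that $\Phi$ is well defined, injective and surjective.

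For well-definedness, suppose $h(U\cap H_i)=h'(U\cap H_i)$ with $h,h'\in H_i$. Then $h^{-1}h'\in U\cap H_i\subseteq U$, so $hU=h'U$.

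Surjectivity is immediate: every element $hU$ of $\mathcal O_{H_i}$ with $h\in H_i$ equals $\Phi(h(U\cap H_i))$.

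Injectivity is the only step with any content, and it is short. Suppose $hU=h'U$ with $h,h'\in H_i$. Then $h^{-1}h'\in U$. Since $H_i$ is a subgroup, we also have $h^{-1}h'\in H_i$. Hence $h^{-1}h'\in U\cap H_i$, which gives $h(U\cap H_i)=h'(U\cap H_i)$.

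Equivalently, $\Phi$ is the orbit–stabilizer bijection $H_i/\mathrm{Stab}_{H_i}(U)\to \mathcal O_{H_i}$. The point is that $\mathrm{Stab}_{H_i}(U)=H_i\cap U$, because the stabilizer of $U$ in $G$ under left multiplication is $U$ itself. I would also note that $\Phi$ is $H_i$-equivariant. It is compatible with the inclusion $Y_i\subset X$ described above, since $S_i\subset S$ means edges $\{h(U\cap H_i),hs(U\cap H_i)\}$ with $s\in S_i$ map to edges $\{hU,hsU\}$ of $X$. This justifies treating $Y_i$ as a subgraph of $X$, though it is not needed for the bijection itself.
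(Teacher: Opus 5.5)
Your proposal is correct and is essentially the paper's proof: the same map $h(U\cap H_i)\mapsto hU$, with well-definedness, surjectivity and injectivity checked directly. Your injectivity step is slightly more explicit than the paper's, since you state that $h^{-1}h'\in H_i$ because $h,h'\in H_i$, whereas the paper uses this without comment.
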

\begin{proof}
    Define a map $\phi:V(Y_i)\to \mathcal O_H$ which take $h(U\cap H_i)$ to $hU$. Suppose $h(U\cap H_i)=h'(U\cap H_i)$. Then, $h^{-1}h'\in U\cap H_i$ and hence $h^{-1}h'\in U$. This implies that $hU=h'U$ and thus $\phi$ is well-defined. Surjectivity of $\phi$ is clear. Suppose $\phi(h(U\cap H_i))=\phi(h'(U\cap H_i))$. Then, $hU=h'U$ and therefore $h^{-1}h'\in U\cap H_i$. This implies that $h(U\cap H_i)=h'(U\cap H_i)$ and hence $\phi$ is injective. This completes the proof of the lemma.
\end{proof}

For each $i\in\{1,2,\dots,n\}$, let $T_i$ be a left transversal for $H_i$ in $G$. For each $i$ and each $t\in T_i$, let $\mathcal O_{i,t}$ denote the left translate of $\mathcal O_{H_i}$ by $t$. 
Note that by Lemma \ref{lemma-bijection}, each $\mathcal O_{i,t}$ is bijective to the vertex set of $Y_i$.

\begin{definition}[Coned-off Cayley-Abels graph] \label{definition-coned-off-graph}
Form a new graph $\hat{X}=\hat{X}(K,S,U)$, called the {\em coned-off Cayley-Abels graph with respect to $\mathcal H$}, as follows. For each $\mathcal O_{i,t}$, add a new vertex, called the {\em cone point}, $v(\mathcal O_{i,t})$ to $X$ and add an edge of length $1$ from this new vertex to each element of $\mathcal O_{i,t}$.
\end{definition}


The pair $(G,\mathcal H)$ is said to be a {\it proper pair} if no pair of distinct non compact subgroups in $\mathcal H$ are conjugate in $G$ (page-832, \cite{arora-pedroja}). From now on, we assume that $(G,\mathcal H)$ is a proper pair.

\begin{definition}[Relatively hyperbolic TDLC groups]\label{definition-tdrh-I}
 The group $G$ is said to be {\em hyperbolic relative to $\mathcal H$} if $\hat{X}$ is a hyperbolic graph and it satisfies the bounded penetration property (BPP).
\end{definition}

For the definition of BPP, one is referred to \cite{farb-relhyp}. 

\begin{lemma}\label{lemma-hyp-implies-rel-hyp}
  If $G$ is a compactly generated hyperbolic TDLC group, then $G$ is hyperbolic relative to any compact open subgroup.  
\end{lemma}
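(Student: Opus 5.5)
Take $\mathcal H=\{U\}$ with $U$ the compact open subgroup of the statement, build the coned-off graph of Definition \ref{definition-coned-off-graph} for it, and check the two conditions of Definition \ref{definition-tdrh-I}: hyperbolicity and BPP. The proper-pair assumption holds trivially, since there is only one subgroup and it is compact.

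\textbf{Setting up the graph.} Fix a compact generating set $K$ of $G$ and a finite symmetric set $S\ni e$ with $K\subset SU$. Let $X=X(K,S,U)$ and take $H_1=U$, which is compact open, hence compactly generated. Then $U\cap H_1=U$, so $Y_1$ has the single vertex $U$, and $S_1=\{e\}\subset S$. By Lemma \ref{lemma-bijection} the orbit $\mathcal O_{H_1}$ is the singleton $\{U\}$, and each translate $\mathcal O_{1,t}$ is the single vertex $tU$. So $\hat X$ is $X$ with one pendant edge of length one attached at every vertex, each ending at a new cone point.

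\textbf{Hyperbolicity of $\hat X$.} Since $G$ is hyperbolic, $X$ is Gromov hyperbolic; hyperbolicity does not depend on the choice of Cayley--Abels graph, as all of them are quasiisometric to $G$ with its word metric. The inclusion $X\hookrightarrow \hat X$ is an isometric embedding: a path through a cone point $v(tU)$ enters and leaves along the same single edge, so removing that detour shortens it. Every point of $\hat X$ lies within distance $1$ of $X$. Hence $\hat X$ is quasiisometric to $X$. Since $\hat X$ is geodesic, it is hyperbolic. More directly, adding pendant edges to a $\delta$-hyperbolic graph gives a $(\delta+2)$-hyperbolic graph.

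\textbf{BPP.} Each coset $tU$ has penetration set consisting of one vertex of $X$. For two relative quasigeodesics $\hat p,\hat q$ without backtracking and with the same endpoints, suppose $\hat p$ penetrates the coset $tU$ and $\hat q$ does not. Then $\hat p$ enters and exits that coset at the same vertex, so the distance in $X$ between its entry and exit points is $0$. If both penetrate, both enter at the same vertex, so the distance between their entry points is $0$, and likewise for the exit points. So BPP holds with constant $0$.

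The only point needing care is to fix the convention for what "penetrating" a coset means when paths pass through cone points, and to confirm that the "no backtracking" convention in \cite{farb-relhyp} does not exclude anything here. Since each orbit is a single vertex, every bound in BPP is trivial under any reasonable convention.
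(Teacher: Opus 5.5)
Your proposal is correct and follows the paper's route. You take the Cayley--Abels graph built from $U$ itself, note that coning off the singleton orbits $\{tU\}$ only attaches pendant edges, conclude that $\hat{X}$ is quasiisometric to $X$ (hence hyperbolic), and observe that BPP holds with constant $0$. These are exactly the checks the paper leaves as ``straightforward'' for Definition \ref{definition-tdrh-I}.
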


\begin{proof}
    Let $U$ be a compact open subgroup of $G$. Consider the Cayley-Abels graph $X$ of $G$ using $U$. Then, it is straightforward to check that $G$ is hyperbolic relative to $\{U\}$.
\end{proof}

In the next section, we introduce a new definition of relatively hyperbolic TDLC group in terms of asymptotic cones and show that it is equivalent to Definition \ref{definition-tdrh-I} (Theorem \ref{theorem-rh-equivalence}). We end this section with the following remark.
\begin{remark}
    In \cite[Definition 4.7]{combination-theorem}, the authors take the subgraphs $Y_i$'s instead of their vertex sets. By Lemma \ref{lemma-bijection}, we have observe that $V(Y_i)$ is bijective to $\mathcal O_{H_i}$. Since the Hausdorff distance between $V(Y_i)$ and $Y_i$ is bounded above by $1$, Definition \ref{definition-tdrh-I} is equivalent to \cite[Definition 4.10]{combination-theorem} and hence, by \cite[Theorem 4.12]{combination-theorem}, Definition \ref{definition-tdrh-I} is equivalent Definition 2.10 and Definition 4.5 of \cite{combination-theorem}.
\end{remark}
\section{Relatively hyperbolic TDLC groups and asymptotically tree-graded spaces}\label{section-3}
The notion of tree-graded spaces was introduced by Drutu--Sapir \cite{drutu-tree-graded-space}. In the same paper, they introduced the notion of asymptotically tree-graded metric spaces. We recall from \cite{drutu-tree-graded-space}, a metric space $Z$ is asymptotically tree-graded (ATG) with respect to a collection of subsets $\mathcal A$ if every asymptotic cone of $Z$ is tree -graded with respect to a collection of limit sets of sequences in $\mathcal A$. For the definition of asymptotic cone, we refer to Section 3 of \cite{drutu-tree-graded-space}.
Equivalently, by \cite[Theorem 4.1]{drutu-tree-graded-space}, $Z$ is asymptotically tree-graded with respect to $\mathcal{A}$ if the following geometric properties are satisfied:

\begin{enumerate}
\item[$(\alpha_1)$] For every $\delta>0$, the intersections of the $\delta$-neighborhoods of distinct elements of $\mathcal{A}$ have uniformly bounded diameter.

\item[$(\alpha_2)$] There exists $M>0$ such that every geodesic whose endpoints lie at distance at most one third of its length from some $A\in\mathcal{A}$ intersects the $M$-neighborhood of $A$.

\item[$(\alpha_3)$] There exists $R>0$ such that every fat geodesic polygon is contained in the $R$-neighborhood of some set $A\in\mathcal{A}$ (see \cite[Definition 3.32]{drutu-tree-graded-space} for fat polygon).
\end{enumerate}

The space $Z$ is said to {\em properly} ATG with respect to $\mathcal A$ if $Z$ is not contained in $N_r(A)$ for any $A\in\mathcal A$ and $r\geq 0$. From now on, we assume that all ATG metric spaces are properly ATG.

For ease of reference, we also include the following properties of ATG spaces. Suppose a metric space $Z$ is ATG with respect to $\mathcal A$.

\begin{enumerate}
\item [$(\beta_2)$] There exists $\epsilon>0$ and $M\geq 0$ such that any geodesic $\alpha$ of length $l$ and $A\in\mathcal A$ satisfying $\alpha(0),\alpha(l)\in N_{\epsilon l}(A)$, the middle third $\alpha([l/3,2l/3])$ is contained in $N_M(A)$.

\item [$(\beta_3)$] There exists $\theta>0,\nu\geq 8$ and $\mathcal X>0$ such that any geodesic hexagon $(\theta,\nu)-$ fat is contained in $N_{\mathcal X}(A)$ for some $A\in\mathcal A$.

\item [(Qconv)] There exists $t> 0$ and $K_0\geq 0$ such that for every $A\in \mathcal A$, $K\geq K_0$ and $x,y\in N_K(A)$, every geodesic joining $x$ and $y$ in $Z$ is contained in $N_{tK}(A)$.
\end{enumerate}

Motivated by the work of Drutu--Sapir \cite{drutu-tree-graded-space} for discrete relatively hyperbolic groups, we introduced the following definition in the realm of compactly generated TDLC groups. We continue to use the same notation from the previous section.

\begin{definition}\label{definition-tdrh-II}
 We say that $G$ is hyperbolic relative to $\mathcal H$ if a Cayley-Abels graph $X$ is asymptotically tree-graded with respect to the collection $\mathcal L=\{\mathcal O_{i,t}: t\in T_i \text{ and } 1\leq i\leq n\}$.
\end{definition}
The following proposition shows the well-definedness of the above definition.
\begin{proposition}\label{prop-well-definedness}
Definition \ref{definition-tdrh-II} is well-defined.
\end{proposition}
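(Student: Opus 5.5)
Well-definedness means that the property in Definition \ref{definition-tdrh-II} does not depend on the choices made: the compact open subgroup $U$, the compact generating set $K$ and finite set $S$ (hence the Cayley-Abels graph $X=X(K,S,U)$), and the transversals $T_i$. The plan is to show that if $X$ is ATG with respect to $\mathcal L$ for one set of choices, then $X'$ is ATG with respect to $\mathcal L'$ for any other set of choices. The tool is the quasiisometry invariance of ATG from \cite{drutu-tree-graded-space}: if $q:Z\to Z'$ is a quasiisometry and $Z$ is ATG with respect to $\mathcal A$, then $Z'$ is ATG with respect to $\{N_C(q(A))\}_{A\in\mathcal A}$, or any collection at uniformly bounded Hausdorff distance from it. This holds because asymptotic cones are bi-Lipschitz equivalent under $q$, and the limit sets of sets at bounded Hausdorff distance coincide. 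I would also use that ATG with respect to $\mathcal A$ is unchanged if each $A$ is replaced by $A'$ with $d_{Haus}(A,A')$ uniformly bounded, again because the limit sets agree.

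\emph{Independence from the transversals.} Changing $t\in T_i$ to $t'=th$ with $h\in H_i$ gives $t'\mathcal O_{H_i}=t\mathcal O_{H_i}$. So $\mathcal L$ as a collection of subsets is determined by the cosets $gH_i$ alone, and this step is trivial.

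\emph{Independence from $K,S$ for fixed $U$.} The graphs $X(K,S,U)$ and $X(K',S',U)$ have the same vertex set $G/U$. The identity on vertices is a $G$-equivariant quasiisometry, since each generator $s'\in S'$ is a bounded word in $S$ and vice versa, using that $S'$ is finite and $X$ is connected. The orbits $\mathcal O_{i,t}$ are literally the same vertex sets, so the quasiinvariance result applies directly.

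\emph{Independence from $U$.} Take two compact open subgroups $U,V$. Then $W=U\cap V$ is compact open, so it suffices to compare $U$ with a subgroup $W\le U$ of finite index. Define $p:G/W\to G/U$ by $gW\mapsto gU$. This map is $G$-equivariant and surjective with fibres of size $[U:W]<\infty$. I would show that $p$ is a quasiisometry of the associated Cayley-Abels graphs, for instance via the Milnor--Schwarz lemma: both vertex sets carry proper cocompact isometric $G$-actions, so each is $G$-equivariantly quasiisometric to $G$ with a word metric. Under $p$, the orbit $tH_iW$ maps onto $tH_iU$, so the image of each element of $\mathcal L_W$ is exactly the corresponding element of $\mathcal L_U$. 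The quasiinvariance result then finishes the argument.

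The main obstacle is to justify carefully that the quasiinvariance result of Dru\c{t}u--Sapir applies with the image collection exactly equal to $\mathcal L'$, up to bounded Hausdorff distance uniform over all $t$ and $i$. Uniformity follows from equivariance: the displacement $d(p(gx),\cdot)$ is controlled by finitely many cases, because $G$ acts transitively. A second point to check is that properness of the ATG structure is preserved, which holds since a quasiisometry cannot map $X$ into a bounded neighbourhood of a single orbit unless the original was already contained in one.
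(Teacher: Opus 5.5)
Your argument is correct and follows the same strategy as the paper. Both produce a quasiisometry between any two Cayley--Abels graphs that sends each $\mathcal O_{i,t}$ to within uniformly bounded Hausdorff distance of the corresponding $\mathcal O'_{i,t}$, then invoke Dru\c{t}u--Sapir's quasiisometry invariance of ATG together with Dru\c{t}u's invariance under bounded Hausdorff perturbation. The only difference is that the paper obtains this orbit-respecting quasiisometry by citing Kron--M\"oller's Theorem 2.7$^{+}$, whereas you build it by hand: the identity on $G/U$ when only $K,S$ change, and the projection $G/(U\cap V)\to G/U$ via \v{S}varc--Milnor when $U$ changes, which has the small advantage that orbits map exactly onto orbits with the same index $(i,t)$.
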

\begin{proof}
Suppose $X=X(K,S,U)$ and $X'=X'(K',S',U')$ are two Cayley-Abels graphs of $G$, where $K$ and $K'$ are compact generating sets of $G$, $U$ and $U'$ are compact open subgroups of $G$, and $S,S'$ are finite symmetric generating sets containing the identity such that $K\subset SU$ and $K'\subset S'U'$. For each $i$ and each $t\in T_i$, let $\mathcal O_{i,t}'$ denote the left translate of $\mathcal O'_{H_i}$ by $t$, where $\mathcal O'_{H_i}$ denotes the $H_i$-orbit of $U'$ in $X'$. 

{\bf Claim:} If $X$ is asymptotically tree-graded with respect to the collection $\mathcal L$ then $X'$ is asymptotically tree-graded with respect to the collection $\mathcal L'=\{\mathcal O'_{i,t}: t\in T_i \text{ and } 1\leq i\leq n\}$.

By \cite[Theorem 2.7$^{+}$]{kron-moller}, there exists a quasiisometry $\psi:X\to X'$ such that $\psi(\mathcal O_{i,t})\subset N_D(\mathcal O_{i,t}')$ for a uniform constant $D\geq 0.$ By \cite[Theorem 5.1(3)]{drutu-tree-graded-space}, $X'$ is asymptotically tree-graded with respect to $\psi(\mathcal O_{i,t})$. Finally, by \cite[Remark 4.2(1)]{drutu-qi-rel-hyp}, we are done. This completes the proof of the claim and hence the proposition.
\end{proof}
By \cite[Theorem 1.1]{sisto-metric-relative-hyperbolicity}, the following is immediate.

\begin{theorem}\label{theorem-rh-equivalence}
Definition \ref{definition-tdrh-I} and Definition \ref{definition-tdrh-II} are equivalent.                               \qed
\end{theorem}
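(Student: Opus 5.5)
The plan is to reduce both definitions to Sisto's metric characterization of relative hyperbolicity \cite[Theorem 1.1]{sisto-metric-relative-hyperbolicity}. For a geodesic metric space $Z$ and a collection $\mathcal P$ of subsets, Sisto shows the following are equivalent: (a) $Z$ is asymptotically tree-graded with respect to $\mathcal P$; (b) the space obtained from $Z$ by coning off (or attaching combinatorial horoballs to) the elements of $\mathcal P$ is hyperbolic, and $\mathcal P$ satisfies the bounded coset penetration type condition. This also requires mild standing hypotheses on $\mathcal P$, such as uniform local finiteness or quasiconvexity-type conditions. We apply this with $Z=X$ a Cayley--Abels graph and $\mathcal P=\mathcal L=\{\mathcal O_{i,t}\}$. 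Definition \ref{definition-tdrh-II} is exactly condition (a). The graph $\hat X$ of Definition \ref{definition-coned-off-graph} is exactly the coned-off space of $X$ over $\mathcal L$, so Definition \ref{definition-tdrh-I} is condition (b).

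First, I will check that the dictionary between the two settings is literal. The cone points in $\hat X$ are attached to each vertex of $\mathcal O_{i,t}$ with edges of length $1$, matching Sisto's coning (or electrification) up to a $G$-equivariant quasiisometry. If Sisto phrases his result with horoballs or with the subgraphs $tY_i$ rather than vertex sets, I will use Lemma \ref{lemma-bijection} and the remark at the end of Section \ref{section-preliminaries}. That remark gives that $V(Y_i)$ and $\mathcal O_{H_i}$ are at Hausdorff distance at most $1$, and changing the peripheral sets by bounded Hausdorff distance preserves both ATG (\cite[Remark 4.2(1)]{drutu-qi-rel-hyp}) and hyperbolicity with BPP of the coned-off graph.

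Second, I will verify Sisto's standing hypotheses for $\mathcal L$. The graph $X$ is a connected, locally finite geodesic graph. The collection $\mathcal L$ is $G$-invariant, with finitely many $G$-orbits (indexed by $i$), and distinct translates are distinct because $T_i$ is a transversal. The properness of the pair $(G,\mathcal H)$ rules out two distinct peripherals being conjugate, which would otherwise violate $(\alpha_1)$ and BPP simultaneously. Finally, Proposition \ref{prop-well-definedness}, together with the independence of Definition \ref{definition-tdrh-I} from the choice of Cayley--Abels graph (via \cite{combination-theorem}), lets us work with the single graph $X=X(K,S,U)$ in which the $Y_i$ embed.

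The main obstacle is matching the BPP condition of Farb used in Definition \ref{definition-tdrh-I} with the exact hypothesis in Sisto's theorem. Sisto's condition is phrased via projections and geodesics in $Z$ and the coned-off space, and may require the coned-off space to be \emph{fine} or the peripheral sets to be uniformly quasiconvex. My first approach is to use Sisto's version that characterizes relative hyperbolicity by hyperbolicity of the coned-off space plus BCP-type penetration, which is Farb's formulation. If that does not match directly, I will pass through \cite[Theorem 4.12]{combination-theorem}, which identifies Definition \ref{definition-tdrh-I} with the Bowditch-type and horoball-type definitions. Those are precisely the inputs Sisto compares with the ATG condition.
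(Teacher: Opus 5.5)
Your proposal is correct and follows the paper's route: the paper's entire proof is an appeal to \cite[Theorem 1.1]{sisto-metric-relative-hyperbolicity} for the pair $(X,\mathcal L)$, with Definition \ref{definition-tdrh-II} as the asymptotically tree-graded side and hyperbolicity of $\hat X$ plus BPP as the coned-off side. The paper leaves your extra checks implicit (bounded Hausdorff perturbation, properness of $(G,\mathcal H)$, matching Farb's BPP with Sisto's penetration condition, if needed via \cite[Theorem 4.12]{combination-theorem}); one small correction is that the remark at the end of Section~\ref{section-preliminaries} bounds the Hausdorff distance between $V(Y_i)$ and the subgraph $Y_i$, whereas Lemma \ref{lemma-bijection} identifies $V(Y_i)$ with $\mathcal O_{H_i}$ exactly.
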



\section{Proof of the main result}\label{section-4}

The goal of this section is to give a proof of Theorem \ref{theorem-main}. In that direction, the following result is a key result whose statement and the idea of the proof are borrowed from \cite[Proposition 5.1]{drutu-qi-rel-hyp} with appropriate modifications.

\begin{proposition}[Equivariant ATG structure implies relative hyperbolicity]\label{main-proposition}
Suppose $(X,d)$ is ATG with respect to a collection of subsets $\mathcal B$ of $V(X)$. Assume that $G$ permutes the subsets in $\mathcal B$. Then, we have the following:
\begin{enumerate}
\item There are only finitely many subsets in $\mathcal B$ which contain $U\in V(X)$.

\item  Let $\mathcal F=\{B_1,\cdots,B_k\}$ be the set of $B\in\mathcal B$ containing $U\in V(X)$. For every $B\in\mathcal B$, the stabilizer ${\rm Stab}_G(B)=\{g\in G:gB=B\}$ acts $\mathcal K$-transitively on $B$ for some $\mathcal K\geq 0$. Moreover, for $i=1,2,\cdots,k$, $\operatorname{Stab}_G(B_i)U\subseteq B_i\subseteq N_{\mathcal K}(\operatorname{Stab}_G(B_i)U),$ where $\operatorname{Stab}_G(B_i)U$ denotes the orbit of $U$.

\item Let $\mathcal K$ be the constant as in $(2)$ and let $D_{2\mathcal K}$ be the uniform bound given by property $(\alpha_1)$ for $(X,\mathcal B)$. If all the subsets in $\mathcal B$ have diameter at most $D_{2\mathcal K}+1$, then $G$ is hyperbolic.

\item Let $\mathcal B'$ be the set of $B\in\mathcal B$ of diameter larger than $D_{2\mathcal K}+1$ and let $\mathcal F'=\mathcal F\cap \mathcal B'.$ Then there exists a subset $\mathcal F_0$ of $\mathcal F'$ such that for every $B\in\mathcal B'$, the intersection of the $G$-orbit of $B$ and $\mathcal F_0$ is singleton.

\item $X$ is ATG with respect to $\mathcal B'.$

\item Let $\mathcal F_0=\{\bar{B_1},\ldots,\bar{B_m}\}$ and $H_j={\rm Stab}_G(\bar{B_j})$. For every $B\in\mathcal B'$ there exists a unique $j\in\{1,\cdots,m\}$ and unique left coset $gH_j$ such that $gH_jU\subset B\subset N_{2\mathcal K}(gH_jU)$, where $\mathcal K$ is the constant as in $(2)$ and $H_jU$ denotes the $H_j$-orbit of $U$ in $X$.

\item For $1\le j\le m$, $H_j$ is compactly generated open subgroup of $G$ and $G$ is hyperbolic relative to $\{H_1,\cdots,H_m\}.$ Moreover, for each $H_j$ there exists a unique $B_i\in\mathcal B$ such that $H_jU\subset B_i\subset N_{2\mathcal K}(H_jU)$, where $\mathcal K$ is the constant as in $(2).$ 
\end{enumerate}
    
\end{proposition}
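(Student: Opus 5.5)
The plan is to follow the proof of \cite[Proposition 5.1]{drutu-qi-rel-hyp} item by item. The discrete group and its Cayley graph are replaced by $G$ acting on the Cayley--Abels graph $X$. This action is vertex-transitive and proper: vertex stabilizers are compact open, and $X$ is locally finite.

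For (1), I would argue by contradiction. Suppose infinitely many $B\in\mathcal B$ contain the vertex $U$. By proper ATG-ness, each $B$ with large diameter contains points far from $U$. Since $X$ is locally finite, a pigeonhole/diagonal argument yields two distinct $B,B'$ whose $\delta$-neighbourhoods share arbitrarily large diameter. This uses (Qconv) to propagate common points along geodesics from $U$, and it contradicts $(\alpha_1)$. The sets of small diameter all lie in a fixed finite ball, so only finitely many of them can contain $U$.

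For (2), take $x\in B$ and translate it to $U$ by some $g\in G$. Then $gB$ is one of the finitely many $B_1,\dots,B_k$ from (1). For $x,y\in B_i$, choose $g_x,g_y$ with $g_xU=x$ and $g_yU=y$. Then $g_x^{-1}B_i$ and $g_y^{-1}B_i$ both lie in $\mathcal F$. Partition $B_i$ according to which element of $\mathcal F$ is obtained; this gives finitely many classes. Within one class, $g_yg_x^{-1}$ stabilizes $B_i$. For the uniform constant $\mathcal K$ I would use the standard argument: $B_i\cap N_r(U)$ is finite, so finitely many representatives per class suffice if we use a bounded-distance version. Concretely, set $\mathcal K=\max$ over classes of the distance from a chosen point to the class. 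This needs each class to come within bounded distance of every point. That follows by composing $g$'s: for $y$ in a class $c$, the map $g_y g_{x_c}^{-1}$ sends a fixed reference $x_c$ to $y$ and stabilizes $B_i$. Hence $B_i\subseteq\bigcup_c\operatorname{Stab}(B_i)x_c$, and since the $x_c$ are finitely many points of $B_i$, this gives $\mathcal K$. The inclusion $\operatorname{Stab}_G(B_i)U\subseteq B_i$ is immediate. Every $B$ is a translate of some $B_i$, so $\mathcal K$ is uniform.

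For (3), if all $B$ are bounded, then fat polygons have bounded size. The ATG conditions then give that asymptotic cones are real trees, so $X$ is hyperbolic, and hence $G$ is hyperbolic. For (4), pick one representative in $\mathcal F'$ for each $G$-orbit met by $\mathcal F'$. Each $B\in\mathcal B'$ has a translate containing $U$, and this translate lies in $\mathcal F'$ because diameter is preserved by the action. Item (5) follows from \cite[Remark 4.2]{drutu-qi-rel-hyp}: discarding uniformly bounded sets preserves ATG. For (6), write $B=g\bar B_j$, so $g\operatorname{Stab}(\bar B_j)U\subseteq B\subseteq N_{\mathcal K}(\cdot)$. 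For uniqueness, if two cosets $gH_jU$ and $g'H_{j'}U$ both lie within $2\mathcal K$ of $B$, then $(\alpha_1)$ applied to $B$ versus $gg'^{-1}$-translates shows the sets $g\bar B_j$ and $g'\bar B_{j'}$ have intersecting $2\mathcal K$-neighbourhoods of diameter greater than $D_{2\mathcal K}$. They must therefore coincide, which forces $j=j'$ by (4) and the same coset.

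For (7), I would argue in four steps. First, $H_j$ is open because it contains the stabilizer of $U$ intersected with finitely many conditions. More precisely, $\operatorname{Stab}(U)$ permutes the finite set $\mathcal F$, so a finite-index open subgroup of it fixes $\bar B_j$. Second, by (Qconv) and (2), the orbit $H_jU$ is coarsely connected and quasiconvex in $X$. The main obstacle is deducing compact generation of $H_j$. Here I would invoke the authors' earlier result from \cite{locally-quasiconvex-combination-theorem}: an open subgroup whose orbit in a Cayley--Abels graph is quasiconvex (coarsely connected) is compactly generated. Concretely, $H_j$ is generated by the compact open $H_j\cap U$-stabilizer together with the finitely many elements moving $U$ a bounded distance within the orbit. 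Third, by (6), the collection $\mathcal B'$ is at finite Hausdorff distance from $\{\mathcal O_{j,t}\}$, so \cite[Remark 4.2(1)]{drutu-qi-rel-hyp} and (5) give Definition \ref{definition-tdrh-II}. Theorem \ref{theorem-rh-equivalence} then yields relative hyperbolicity. Finally, the stated "moreover" part is (6) with $g=1$.
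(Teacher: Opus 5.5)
Your proposal is correct and follows essentially the same item-by-item adaptation of Dru\c{t}u's Proposition 5.1 as the paper: the same pigeonhole-on-a-sphere argument for (1), the same finite choice of transporting elements for (2), the same appeal to Dru\c{t}u for (3) and (5), the same $(\alpha_1)$ argument for uniqueness in (6), and the same lemma from \cite{locally-quasiconvex-combination-theorem} for compact generation. In (7) your openness argument is in fact more careful than the paper's, which simply asserts $U\subseteq H_j$ (false in general: for $G=H\ast F$ with $U=F$ finite and $H$ nontrivial, $fHU\neq HU$ for $f\in F\setminus\{1\}$); you should just add that $U\cap H_j$ is closed, since set-stabilizers of subsets of the discrete set $V(X)$ are closed, so being of finite index in the compact group $U$ it is open.
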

\begin{proof}
   $(1)$ By the quasiconvexity property $(\mathrm{Qconv})$ of $\mathcal{B}$, there exists a constant $\sigma>0$ such that, whenever $x,y\in B$ for  $B\in\mathcal{B}$, every geodesic segment in $X$ joining $x$ and $y$ is contained in $N_{\sigma}(B)$. Moreover, since $\mathcal B$ satisfies property $(\alpha_1)$, there exists a constant $D_{\sigma}>0$ such that, for any two distinct sets $B,B'\in\mathcal{B}$,
$\operatorname{diam}\bigl(N_{\sigma}(B)\cap N_{\sigma}(B')\bigr)\le D_{\sigma}.$ We consider the following two cases.

\medskip
\noindent\textbf{Case 1:} $U\in B$ and $\operatorname{diam}(B)\le 3D_{\sigma}$.

Note that every point of $B$ lies at a distance at most $3D_{\sigma}$ from $U$. Hence, we have $B\subseteq B(U,3D_{\sigma}),$
where $B(U,3D_{\sigma})$ denotes the closed ball of radius $3D_{\sigma}$ centered at $U$.
Since $B(U,3D_{\sigma})$ is finite, there are only finitely many possibilities for such subsets $B$ in $\mathcal B$.

\medskip

\noindent\textbf{Case 2:} $U\in B$ and $\operatorname{diam}(B)>3D_{\sigma}$.

For each such $B$, there exists a point $xU\in B$ such that $d(U,xU)>3D_{\sigma}.$
By quasiconvexity of $B$, geodesic segments joining $U$ and $xU$ is contained in $N_{\sigma}(B)$. Since $d(U,xU)>3D_{\sigma}$, this geodesic intersects the sphere
$S(U,2D_{\sigma})=\{yU\in X\mid d(U,yU)=2D_{\sigma}\}.$
Therefore, $N_{\sigma}(B)\cap S(U,2D_{\sigma})\neq\varnothing.$
Define a map
\[
\Phi:\{B\in\mathcal{B}\mid U\in B,\ \operatorname{diam}(B)>3D_{\sigma}\}
\longrightarrow \mathcal{P}(S(U,2D_{\sigma}))
\]
by
$\Phi(B)=N_{\sigma}(B)\cap S(U,2D_{\sigma}),$ where $\mathcal{P}(S(U,2D_{\sigma}))$ denotes the power sets.
We claim that $\Phi$ is injective. 
By property $(\alpha_1)$ and the choice of $D_{\sigma}$, we see that the sets $N_{\sigma}(B)\cap S(U,2D_{\sigma})$ and $N_{\sigma}(B')\cap S(U,2D_{\sigma})$
are disjoint whenever $B\neq B'$. Hence $\Phi$ is injective.
Since the number of elements in $S(U,2D_{\sigma})$ is finite, its power set $\mathcal{P}(S(U,2D_{\sigma}))$ is also finite. Therefore, only finitely many subsets $B\in\mathcal{B}$ can occur in this second case.
Combining the two cases, we conclude that only finitely many subsets in $\mathcal{B}$ contain $U$.


(2) For each $i=1,\ldots,k$, let $\mathcal I_i=\{j\in\{1,2,\ldots,k\}:gB_i=B_j\; \text{for some}\; g\in G\}$. For every $j\in\mathcal I_i$, let $g_j\in G$ be such that $g_jB_i=B_j$. Set $\mathcal K_i={\rm max}_{j\in \mathcal I_i}d(U,g_jU)$ and set $\mathcal K={\rm max}_{1\leq i\leq k}\mathcal K_i$.
   We show that, for every $B\in\mathcal B$, ${\rm Stab}_G(B)$ acts $\mathcal K$-transitively on $B$.

Let $xU, bU\in B$ be arbitrary elements. Clearly, $b^{-1}B$ and $x^{-1}B$ contain $U$ and are in $\mathcal{B}$. Thus, 
$b^{-1}B = B_i$ and $x^{-1}B = B_j$
for some $i,j \in \{1,2,\ldots,k\}$.
Since $b^{-1}xB_j = B_i,$ it implies that $B_j = g_j B_i$, where $g_j=x^{-1}b$, and hence $x^{-1}B = g_j b^{-1}B.$ This implies that
$xg_j b^{-1} \in \operatorname{Stab}_G(B).$ Note that, $d(xU,\operatorname{Stab}_G(B)bU)\le d(xU,xg_jb^{-1}bU)=d(U,g_jU)\le \mathcal K$.  Now, the moreover part of the statement is clear.

(3) If the diameter of elements in $\mathcal B$ is at most $D_{2\mathcal K}+1$, then the hyperbolicity of $G$ follows from \cite[Corollary 4.21]{drutu-qi-rel-hyp}.

(4) The set $\mathcal F_0$ can be obtained by considering one by one elements $B_i\in\mathcal F'$ and deleting from $\mathcal F'$ all $B_j$ such that $j\in\mathcal I_i$ and $j\neq i$.

(5) It is clear that $G$ permutes the subsets in $\mathcal B'$. By \cite[Corollary 4.26]{drutu-qi-rel-hyp}, $X$ is ATG with respect to $\mathcal B'$.

(6) {\em Existence.} Let $B\in\mathcal B$. Then, by (4), there exists a unique $j\in\{1,\cdots,m\}$ such that $hB=\bar{B}_j$, where $h\in G$. Setting $g=h^{-1}$, we see that $B=g\bar{B}_j.$ By the moreover part of (2), we have that $gH_jU\subseteq B\subseteq N_{\mathcal K}(gH_jU).$

{\em Uniqueness.} Suppose, for $j,l\in\{1,\cdots,m\}$, $gH_jU$ and $g'H_lU$ both satisfy the last inequality of the previous paragraph. Then, $$g\bar{B}_j\subset N_{\mathcal K}(gH_jU)\subset N_{\mathcal K}(B)\subset N_{2\mathcal K}(g'H_lU)\subset N_{2\mathcal K}(g'\bar{B}_l).$$ Note that $g\bar{B}_j$ and $g'\bar{B}_l$ are in $\mathcal B'$. This implies that the diameter of $g\bar{B}_j$ is at least $D_{2\mathcal K}+1$. By Property $(\alpha_1)$, we see that $g\bar{B}_j=g'\bar{B}_l$. The definition of $\mathcal F_0$ implies that $j=l$. Then, $g^{-1}g'\in H_j$ and hence $gH_j=g'H_l$.

(7) By (5), (6) and \cite[Remark 4.2]{drutu-qi-rel-hyp}, we see that $X$ is ATG with respect to $\{gH_jU:g\in G/H_j, 1\le j\le m\}$. This implies that each $H_jU$ is quasiconvex in $X$. Since each $H_j$ contains $U$ that is an open subgroup of $G$, $H$ is an open subgroup of $G$. By \cite[Lemma 3.3(1)]{locally-quasiconvex-combination-theorem}, each $H_j$ is compactly generated. Finally, by Definition \ref{definition-tdrh-II}, $G$ is hyperbolic relative to $\{H_1,\cdots,H_m\}.$ The moreover part is now clear from (6). If $G=H_j$ then the moreover part of $(2)$ implies that $V(X)=\bar{B}_j$. This completes the proof of the proposition.
\end{proof}

The following theorem shows that if a Cayley-Abels graph $X$ of a compactly generated TDLC group $G$ is ATG with respect to a collection of subsets of $V(X)$, then $G$ is either hyperbolic or relatively hyperbolic with respect to a collection of compactly generated open subgroups.
\begin{theorem}\label{theorem-ATG}
 Suppose that $(X,d)$ is ATG with respect to a collection of subsets $\mathcal A$ of $V(X)$. 
 Then $G$ is either hyperbolic or relatively hyperbolic with respect to a family of subgroups $\{H_1,H_2,\ldots,H_m\}$ such that, for each $j$, the $H_j$-orbit of $U$ is contained in $N_L(A)$ for some $A\in\mathcal A$, where $L$ is a constant depending only on $(X,d)$ and $\mathcal A$.  
\end{theorem}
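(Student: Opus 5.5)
The plan follows Druţu's proof of \cite[Theorem 5.1]{drutu-qi-rel-hyp}: from $\mathcal A$, which need not be $G$-invariant, we build a $G$-invariant collection $\mathcal B$ of subsets of $V(X)$ and then apply Proposition \ref{main-proposition}.

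\textbf{Step 1: translate $\mathcal A$ by $G$.} Since $G$ acts on $X$ by isometries, each translate $g\mathcal A=\{gA:A\in\mathcal A\}$ is again a collection with respect to which $X$ is ATG, with the same constants in $(\alpha_1)$--$(\alpha_3)$, $(\beta_2)$, $(\beta_3)$ and (Qconv).

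\textbf{Step 2: build the invariant collection $\mathcal B$.} The key rigidity input is \cite[Theorem 4.1/Corollary 4.19--4.26]{drutu-qi-rel-hyp}. Roughly, if $X$ is ATG with respect to two collections $\mathcal A_1$ and $\mathcal A_2$ with uniform constants, then each unbounded-enough $A_1\in\mathcal A_1$ lies within uniform Hausdorff distance $L_0$ of some $A_2\in\mathcal A_2$, and conversely. The constant $L_0$ depends only on the ATG constants, which is why it is uniform here.

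First discard from $\mathcal A$ the sets of diameter at most a fixed bound $C$ (allowed by \cite[Corollary 4.26]{drutu-qi-rel-hyp}), so that what remains is still ATG.

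Then, following Druţu, define $\mathcal B$ as follows. Apply the rigidity statement to $g\mathcal A$ versus $\mathcal A$: for each $A\in\mathcal A$ and $g\in G$, the set $gA$ is $L_0$-Hausdorff close to a unique $A'\in\mathcal A$. Set
\[
B_A=\bigcup_{g\in G,\ A'\in\mathcal A:\ gA'\ \text{close to } A} N_{L_0}(gA')\cap V(X).
\]
Equivalently, $\mathcal B$ is the $G$-orbit of the equivalence classes of "uniformly Hausdorff-close" sets, with each class replaced by the union of its members' neighbourhoods, intersected with $V(X)$. Uniqueness from $(\alpha_1)$ makes the classes well defined. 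By construction $G$ permutes $\mathcal B$, and each $B\in\mathcal B$ is at Hausdorff distance at most $2L_0$ from some $A\in\mathcal A$.

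\textbf{Step 3: check $\mathcal B$ is ATG and conclude.} $X$ remains ATG with respect to $\mathcal B$ by \cite[Remark 4.2(1)]{drutu-qi-rel-hyp}, since replacing sets by uniformly Hausdorff-close sets preserves ATG.

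Now apply Proposition \ref{main-proposition}. If all $B\in\mathcal B$ have diameter at most $D_{2\mathcal K}+1$, part (3) shows that $G$ is hyperbolic. Otherwise, part (7) gives compactly generated open subgroups $H_1,\dots,H_m$ such that $G$ is hyperbolic relative to $\{H_j\}$ and $H_jU\subset B_i$ for some $B_i\in\mathcal B$. Hence $H_jU\subset N_{2L_0}(A)$ for some $A\in\mathcal A$, and we take $L=2L_0$.

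If some $H_j=G$, then $V(X)\subset N_L(A)$, contradicting proper ATG. So this degenerate case does not occur, or it falls into the hyperbolic alternative.

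\textbf{Main obstacle.} The hardest step is Step 2: showing that $L_0$ is independent of $g$, and that the union defining $B_A$ does not blow up. That is, all translates $gA'$ close to $A$ must stay uniformly close to $A$ rather than drifting. I would get this from $(\alpha_1)$ together with the uniform version of Druţu's rigidity (\cite[Theorem 4.1]{drutu-qi-rel-hyp}). Uniformity comes from the fact that all the collections $g\mathcal A$ share the same ATG constants and that $X$ is locally finite with a vertex-transitive action.
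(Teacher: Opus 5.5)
Your overall plan is the paper's: build a $G$-invariant collection $\mathcal B$ with respect to which $X$ is ATG and apply Proposition \ref{main-proposition}. The paper does not write this construction out; it defers it to the proof of \cite[Theorem 5.4]{drutu-qi-rel-hyp}. The gap is in your Step 2, which is the whole content of the proof: the rigidity statement you rely on is false. Two ATG structures on the same space with uniform constants need not have pieces that are uniformly Hausdorff close. The paper's own remark in the introduction says that, for $G=A\ast_K B\ast_L C$ with $A,B,C$ non-compact, the Cayley--Abels graph is ATG both with respect to the orbits of $A,B,C$ and with respect to the orbits of $P=A\ast_K B$ and $C$. Yet $AU$ is at infinite Hausdorff distance from every $gPU$ and every $gCU$.

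The special case you actually need, comparing $g\mathcal A$ with $\mathcal A$, fails as well. In the same graph take $\mathcal A=\{PU\}\cup\{gAU,\,gBU : g\notin P\}\cup\{gCU : g\in G\}$, i.e.\ merge the pieces lying in $PU$ into $PU$; every asymptotic cone stays tree-graded. For $g\notin P$, the translate $g^{-1}(gAU)=AU$ is unbounded, lies inside $PU$, and is at infinite Hausdorff distance from every member of $\mathcal A$. Hence $G\cdot\mathcal A$, and anything obtained from it by merging Hausdorff-close members, contains both $AU$ and $PU$ and violates $(\alpha_1)$. So Step 3 also breaks: once you pass to $G$-orbits, $\mathcal B$ is no longer a bounded perturbation of $\mathcal A$, and \cite[Remark 4.2(1)]{drutu-qi-rel-hyp} does not apply. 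Nor can $(\alpha_1)$ rescue Step 2, since it only controls two distinct pieces of one collection, not a piece of $g\mathcal A$ against a piece of $\mathcal A$.

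The missing idea is that the invariant structure is in general a \emph{refinement} of $\mathcal A$, not a coarsening. Its pieces must be cut down by comparing pieces of different translates, for instance via coarse intersections $N_M(gA)\cap N_M(A')$; in the example, $N_M(AU)\cap N_M(PU)$ recovers $AU$. The substance of the argument imported from \cite[Theorem 5.4]{drutu-qi-rel-hyp} is to do this while keeping the ATG constants uniform over the infinitely many translates $g\mathcal A$. Two facts make this possible: all translates share the constants of $\mathcal A$, and $X$ is locally finite with a vertex-transitive $G$-action.

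This is also why the statement only asserts the containment $H_jU\subset N_L(A)$, and why Theorem \ref{theorem-main} only gives quasiisometric embeddings $H_i'\to H_j$. Your intended conclusion, that every $B\in\mathcal B$ lies within Hausdorff distance $2L_0$ of a member of $\mathcal A$, is false in general. In the example above with $A,B,C$ non-hyperbolic, no $G$-invariant ATG collection has all its unbounded members Hausdorff close to members of $\mathcal A$.
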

\begin{proof}
    In view of Proposition \ref{main-proposition}, it is sufficient to construct a collection $\mathcal B$ of subsets of $V(X)$ such that $\mathcal B$ is $G$-equivariant and $X$ is ATG with respect to $\mathcal B$. One can prove it by following the proof \cite[Theorem 5.4]{drutu-qi-rel-hyp}. Hence, we skip its proof.
\end{proof}

Now, we are ready to give a proof of our main theorem.
\vspace{.2cm}

{\em Proof of Theorem \ref{theorem-main}:} When $G$ is compact, then its Cayley-Abels graph is finite. By \cite[Lemma 2.7]{quasiisometry-tomar-mandal}, $G$ is quasiisometric to its Cayley-Abels graph. This implies that $G'$ is quasiisometric to a finite graph and thus any Cayley-Abels graph of $G'$ must be finite. This, in turn, implies that $G'$ is compact. We assume henceforth that both $G$ and $G'$ are non-compact. 

Let $X=X(K,S,U)$ and $X'=X'(K',S',U')$ be Cayley-Abels graphs of $G$ and $G'$, respectively. Suppose $q:X\to X'$ is a $(\lambda,c)$-quasiisometry, and $\bar{q}:X'\to X$ is its quasiinverse so that $d(q\circ \bar{q}, id_{X'})\leq D$ and $d(\bar{q}\circ q,id_X)\leq D$, where $D=D(\lambda,c)$. Since $G$ is hyperbolic relative to $\mathcal H$, $X$ is ATG with respect to $\mathcal A:=\{gH_iU: g\in G/H_i \text{ and } 1\le i\le n\}.$ By \cite[Theorem 5.1]{drutu-tree-graded-space}, $X'$ is ATG with respect to $\{q(A):A\in\mathcal A\}$. Moreover, all the constants appearing in the properties $\alpha_i$, $i=1,2,3$, $\beta_j$, $j=2,3$, and (Qconv) for $(X',q(\mathcal A))$ can be formulated as functions of $(\lambda,c)$ and of the constants in the similar properties for $(X,\mathcal A)$.

By Theorem \ref{theorem-ATG}, we see that $G'$ is either hyperbolic or $G'$ is hyperbolic relative to $\mathcal H'=\{H_1',\cdots,H_m'\}$, and each $H_i'U'$ is contained in $N_K(q(A_j))$ for some $A_j\in\mathcal A$, where $K$ is a uniform constant depending only on $(\lambda,c)$ and the constant in the properties $(\beta_2)$ and $(\beta_3)$ for $(X,\mathcal A)$. Let $\pi_1:N_K(q(A_i))\to q(A_i)$ be a map such that $d(x,\pi_1(x))\le K$. Then, it is easy to check that $\pi_1$ is $(1,2K)$-quasiisometric embedding. Similarly, let $\pi_2:N_D(A_i)\to A_i$ be a $(1,2D)$-quasiisometric embedding. Now, the restriction of $\pi_2\circ \bar{q}\circ \pi_1$ to $H_i'U'$ is a $(\lambda',c')$-quasiisometric embedding into $gH_jU$ for some $j\in\{1,\cdots,n\}$ and with $(\lambda',c')$ depending only on $(\lambda,c),K,D$. Hence, each $H_i'$ embeds quasiisometrically into some $H_j$. 

If $G'$ is hyperbolic, then, by Lemma \ref{lemma-hyp-implies-rel-hyp}, $G'$ is hyperbolic relative to any compact open subgroup $U\neq G$. Thus, all the statements in the theorem hold. If $G'=H_i'$ then $X'=N_K(q(A_j))$, which implies that $X\subset N_c(\bar{q}(G'))\subset N_{\lambda K+2c+D}(A_j)$. This is a contradiction, as $X$ is a proper ATG with respect to $\mathcal A$.      \qed

\vspace{.2cm}
\noindent

\vspace{.2cm}
\noindent
{\bf Conflict of interest:} On behalf of all authors, the corresponding author states
that there is no conflict of interest.

\bibliography{ref-3}
\bibliographystyle{amsalpha}
\end{document}